\newtheorem{Theorem}[equation]{Theorem}
\newtheorem{Corollary}[equation]{Corollary}
\newtheorem{Lemma}[equation]{Lemma}
\newtheorem{Proposition}[equation]{Proposition}
\theoremstyle{definition}
\newtheorem{Definition}[equation]{Definition}
\newtheorem{Example}[equation]{Example}
\theoremstyle{remark}
\newtheorem{Remark}[equation]{Remark}
\numberwithin{equation}{section}
\numberwithin{figure}{section}
\newcommand{\A}{{\mathbb A}}
\newcommand{\PP}{{\mathbb P}}
\newcommand{\C}{{\mathbb C}}
\newcommand{\Z}{{\mathbb Z}}
\newcommand{\Q}{{\mathbb Q}}
\newcommand{\R}{{\mathbb R}}
\newcommand{\N}{{\mathbb N}}
\newcommand{\mc}[1]{\mathcal{#1}} % short for mathcal
\newcommand{\ms}[1]{\mathscr{#1}} % short for mathscript
\newcommand{\mf}[1]{\mathfrak{#1}} % short for mathfrak
\newcommand{\mt}[1]{\text{#1}}
\DeclareMathOperator{\Sym}{Sym}
\DeclareMathOperator{\Skew}{Skew}
\DeclareMathOperator{\Mat}{Mat}
\newcommand{\ud}{\,\mathrm{d}}
\newcommand{\ue}{\ud \varepsilon}
\begin{document}

\title{Regular Unipotent Invariants on the Wonderful Compactification of Complete Binary Forms}

\author[1]{Mahir Bilen Can}
\author[2]{Roger Howe}
\author[3]{Michael Joyce}
\affil[1]{Tulane and Yale Universities}
\affil[2]{Yale University}
\affil[3]{Tulane University}

	\date{\today}
	\maketitle

\begin{abstract}
Using our preliminary calculations from \cite{CHJ13}, we investigate the topology of the
closure in a wonderful compactification of the set of unipotent-invariant bilinear forms.
\end{abstract}

{\em Keywords: Wonderful compactifications, Springer fibers.}

{\em MSC-2010: 15B10, 05E18, 22E46}

\section{\textbf{Introduction}}

Let $G$ denote the special linear group $\mt{SL}_n$ over $\C$, the field of complex numbers.
Let $\mc{U} \subset G$ denote the set of all unipotent elements, which forms an
irreducible, closed subvariety of $G$. Consider the incidence
variety $X= \{ (gB,x)\in G/B\times G :\ g^{-1}xg \in \mc{U} \}$.
It is well known that $\mc{U}$ is singular; however, the second projection $pr_2 : X \rightarrow G$
is a resolution of singularities for $\mc{U}$ (see \cite{Springer68} and \cite{Steinberg76}). The fibers of $pr_2$, known as
{\em Springer fibers}, have many remarkable combinatorial and geometric properties.
For example, if $u\in G$ is a unipotent element, then for each $i\geq 0$
the cohomology space $H^{i}(pr_2^{-1}(u), \Q )$ is a representation of the
symmetric group $S_n$, and the top non-vanishing cohomology space is an irreducible $S_n$-module
(\cite{Springer76}, \cite{Springer78}). Furthermore, all irreducible components of $pr_2^{-1}(u)$ have the same dimension
$e(u)$, which is equal to the half of the dimension of the top non-vanishing cohomology space
$H^{2e(u)} (pr^{-1}(u),\Q)$, \cite{Spaltenstein77}. These facts lead to striking combinatorial results as
in \cite{HottaShimomura,Lusztig81, Steinberg88}.

Motivated by the combinatorial significance of the fibers of $pr_2$, in our earlier paper \cite{CHJ13} we initiated a study of the sets
\begin{align}\label{earlier}
(G/K)_u = \{ gK:\ g^{-1}ug \in K \},
\end{align}
when $K=\mt{SO}_n$, the special orthogonal group, and when $K= \mt{Sp}_n$, the symplectic group.
Here $u\in G$ is a prescribed unipotent element.
Unlike the fibers $pr_2^{-1}(u)$, our sets $(G/K)_u$ are not complete.
Therefore, in this paper we continue our analysis by studying their completions in a compactification of $G/K$.

Suppose there exists an involution $\theta: G\rightarrow G$ such that $K= \{ g\in G:\ \theta(g)= g\}$ is
the fixed subgroup of $\theta$. Let $H=N_G(K)$ denote the normalizer of $K$ in $G$; in the two examples above, $H = Z(\mt{SL}_n) K$, 
where $Z(\mt{SL}_n)$ denotes the center of $\mt{SL}_n$.
The {\em wonderful embedding of $G/H$} is the unique smooth projective $G$-variety $\mc{M}=\mc{M}_{G/H}$
that contains a copy of $G/H$ as a $G$-stable open subset, and its boundary
$\mc{M} - G/H$ is a finite union of $G$-stable divisors with normal crossings \cite{DP83}.
The case where $G = G_1 \times G_1$, with $G_1$ reductive and $\theta(x,y) = (y,x)$, so that $K = \text{diag}(G_1)$ 
and $G/K \cong G_1$, has been studied previously.
In fact, the wonderful embedding in this case is used in the study of character sheaves by Lusztig  \cite{Lusztig04} and by Lusztig-He \cite{HeLusztig06}. 
The Zariski closures in the wonderful compactifications of some closely related subvarieties of $G_1$
(called ``Steinberg fibers'') are studied by He \cite{He06} and by He-Thomsen \cite{HeThomsen06}.
For a survey of these results, we recommend Springer's I.C.M. report \cite{Springer06}.

Our main focus is on the fixed locus of a regular unipotent element $u$.  (Here, regular means that the Jordan type of $u$ consists of a single block.)
Let $X$ denote the wonderful compactification $\mathcal{M}_{G/H}$ as described above corresponding to $K = \mt{SO}_n$ or $K = \mt{Sp}_n$. 
We construct a cell decomposition of the variety $X^u$ consisting of complete quadrics (respectively, complete skew forms) that are invariant under $u$.
As an application, we compute the Poincar\'e polynomial (i.e. the generating function for the dimensions of the cohomology with $\mathbb{Q}$-coefficients) 
of the variety $X^u$ in each of these cases.
The Poincar\'e polynomial corresponding to $\mt{SO}_n$ is
$\sum_{i=0}^n {n - 1 - i \choose i} x^i$, and the Poincar\'e polynomial for $\mt{Sp}_{n}$ is $(1+x)^{n/2-1}$.
(In fact, the first Poincar\'e polynomial is a polynomial analog of the $(n-1)$-st Fibonacci number.)
%We observe also that these unipotent invariant varieties admit $\mt{SL}_2$ actions; however, they are not $\mt{SL}_2$-regular in the sense of Akyildiz-Carrell \cite{AC87}.

Our paper is organized as follows.  After giving some notational conventions in the next section, in Section \ref{SS:unipotentinvariantmatrices} 
we review some results from \cite{CHJ13} that explicitly describe which matrices are invariant under the action $u \cdot A = (u^{-1})^{\top} A u^{-1}$ of a unipotent matrix $u$.
In Section \ref{SS:sl2triplet} we introduce one parameter subgroups that we use in the cell decompositions of Section \ref{S:celldecompositions}, 
and in Section \ref{SS:embeddings} we give a brief overview of the construction of the wonderful compactifcation, focusing on the special cases of
complete quadrics (Section \ref{S:complete quadrics}) and complete skew forms (Section \ref{S:complete skew forms}) in detail.
In Section \ref{S:fixedsubvarieties}, we describe a certain decomposition of unipotent fixed elements of the varieties of complete quadrics and of complete 
skew forms that we prove in Section \ref{S:celldecompositions} give a Bia{\l}ynicki-Birula cell decomposition.  
Finally, we prove the assertions made above about the Poincar\'e polynomials.

\vspace{1cm}
\textbf{Acknowledgements} The first author is partially supported by the Louisiana Board of Regents
enhancement grant.

\section{\textbf{Notation}}
\label{S:Notation}

We use $\N$ to denote non-negative integers, and if $n\in \N$ is non-zero,
then we denote by $[n]$ the set $\{1,2,\dots, n\}$.
All varieties and spaces in this manuscript are defined over $\C$ and are not necessarily irreducible.
Our basic list of notation is as follows:
\begin{eqnarray*}
\Mat_n: &&\ \text{space of}\ n\times n \ \text{matrices}\\
\mt{SL}_n \subset \Mat_n: &&\ \text{invertible matrices with determinant 1}\\
\Sym_n: &&\ \text{space of}\ n\times n \ \text{symmetric matrices}\\
\Sym_n^0 \subset \Sym_n: &&\  \text{invertible symmetric matrices }\\
\Skew_n: &&\ \text{space of}\ n\times n \ \text{skew-symmetric matrices}\\
\Skew_n^0 \subset \Skew_n: &&\  \text{invertible skew-symmetric matrices }
\end{eqnarray*}
Note that $\Mat_n \cong \Sym_n \bigoplus \Skew_n$.

There is a common left action of $\mt{SL}_n$ on the spaces $\Mat_n,\Sym_n$ and $\Skew_n$ given by
\begin{align}\label{Main Action}
g \cdot A = (g^{-1})^\mathsf{T} A g^{-1}.
\end{align}
It follows from Cholesky's decompositions that on $\Sym_n^0$ and $\Skew_n^0$ the action (\ref{Main Action}) of $\mt{SL}_n$
is transitive (see Appendix B of \cite{GoodmanWallach}).
It is easy to check that the stabilizer of the identity matrix $I_n \in \mt{Sym}_n^0$ is the special orthogonal group $\mt{SO}_n:=\{ A \in \mt{SL}_n:\ A A^\top = I_n \}$, 
and the stabilizer of
$$
J = \begin{pmatrix} 0 & I_n \\ -I_n & 0 \end{pmatrix} \in \mt{Skew}_{2n}^0
$$
is the symplectic subgroup $\mt{Sp}_{2n} \subset \mt{SL}_{2n}$.
Therefore, the quotients $\mt{SL}_{n}/\mt{SO}_{2n}$ and $\mt{SL}_{2n}/\mt{Sp}_{2n}$ are canonically identified with
the quasi-affine varieties $\mt{Sym}_n^0$ and $\mt{Skew}_{2n}^0$, respectively.

A {\em composition} of $n$ is an ordered sequence positive integers that sum to $n$.
For example, $(1,3,2,4,2)$ is a composition of 12. We denote by $Comp(n)$ the set of all
compositions of $n$.

A {\em partition} of $n$ is an unordered sequence of integers that sum to $n$.
Since it is unordered, without loss of generality we assume that the entries of a partition
form a non-increasing sequence.
Let $\lambda = (\lambda_1, \lambda_2, \dots, \lambda_k)$ be a partition of $n$ with $\lambda_k \geq 1$.
In this case, we call $k$ the length of $\lambda$, and denote it by $\ell(\lambda)$. Entries $\lambda_i$, $i=1,\dots, k$
are called the {\em parts} of $\lambda$.
Alternatively, we write $\lambda = (1^{\alpha_1}, 2^{\alpha_2}, \dots, l^{\alpha_l})$ to indicate that
$\lambda$ consists of $\alpha_1$ 1's, $\alpha_2$ 2's, and so on.
Zero parts or terms with zero exponent may be added and removed without altering $\lambda$.
For example, each of $(3,3,2)$, $(3,3,2,0)$, $(1^0, 2^1, 3^2)$, and $(2^1, 3^2)$ represent the same
partition $3 + 3 + 2$ of $8$.  As a special case used frequently in this paper, the notation $(k)$ indicates the partition with single part equal to $k$.

\begin{comment}

Given a partition $\lambda$ of $n$ with $k$ parts, the {\em $\lambda$-decomposition} of an $n$-by-$n$ matrix
is obtained by inserting horizontal lines after rows $\lambda_1$, $\lambda_2$, \dots, $\lambda_{k-1}$
and similarly inserting vertical lines after columns $\lambda_1$, $\lambda_2$, \dots, $\lambda_{k-1}$,
thereby giving a block decomposition of the matrix. For example, here is the $5$-by-$5$ identity matrix $I_5$
with its $(2,1,1,1)$-decomposition:
$$
I_5 = \begin{pmatrix}
1 & 0 & \vline & 0 & \vline & 0 & \vline & 0 \\
0 & 1 & \vline & 0 & \vline & 0 & \vline & 0 \\
\hline
0 & 0 & \vline & 1 & \vline & 0 & \vline & 0 \\
\hline
0 & 0 & \vline & 0 & \vline & 1 & \vline & 0 \\
\hline
0 & 0 & \vline & 0 & \vline & 0 & \vline & 1
\end{pmatrix}
$$

\end{comment}

Finally, for an arbitrary matrix $B=(b_{ij})_{i,j=1}^n$, we say that $b_{ij}$ lies on the $k$-th anti-diagonal if $i+j = n+k$.
The first anti-diagonal of $B$ is the {\em main anti-diagonal}.

\section{\textbf{Preliminaries}}
\label{S:preliminaries}

\subsection{Unipotent invariant matrices}
\label{SS:unipotentinvariantmatrices}

We recall some basic structural results on unipotent fixed matrices from \cite{CHJ13}.

Let $N$ be a nilpotent matrix with entries in $\C$, $u = \exp N$.
There is an elementary but efficient criterion for determining when a matrix is fixed by $u$:
\begin{align}
u\cdot A = A \ \iff \ AN+ N^\mathsf{T} A = 0.
\end{align}

It is easy to see that the Jordan type of $u = \exp(N)$ is the same as the Jordan type of $N$.
Let $N$ and $N'$ be two nilpotent matrices that are conjugate in $\mt{SL}_n$, say $N' = S N S^{-1}$. If we set
$u = \exp(N)$ and $u' = \exp(N')$, then the fixed point loci of $u$ and $u'$ are isomorphic via
$A \mapsto S A S^\mathsf{T}$.
It follows that the fixed point space $X^u$ depends only on the Jordan type of $u$, or equivalently
only on the Jordan type of $N$.

It is well known that the Jordan classes of $n$-by-$n$ nilpotent matrices are in
bijection with partitions of $n$.  Indeed, let $N_{(p)}$ be the $p$-by-$p$ matrix
\begin{align}
N_{(p)} = \left( \begin{matrix}
0 & 1 & 0 & \cdots & 0 & 0\\
0 & 0 & 1 & \cdots & 0 & 0\\
\vdots & \vdots & \vdots & \ddots & \vdots & \vdots\\
0 & 0 & 0 & \cdots & 0 & 1\\
0 & 0 & 0 & \cdots & 0 & 0
\end{matrix} \right).
\end{align}
Then the above correspondence associates to a partition
$\lambda = (\lambda_1, \lambda_2, \dots, \lambda_k$),
to the Jordan matrix $N_{\lambda}$ given in block form by
$$
N_{\lambda} = \left( \begin{matrix}
N_{\lambda_1} & 0 & \cdots & 0\\
0 & N_{\lambda_2} & \cdots & 0\\
\vdots & \vdots & \ddots & \vdots\\
0 & 0 & \cdots & N_{\lambda_k}
\end{matrix} \right).
$$
From now on if the Jordan type of $u$ is $\lambda$, we write $X^\lambda$ in place of $X^u$.

It is straightforward to verify that a generic element $A = (a_{i,j})_{i,j=1}^n$ of $\Mat_n^{(n)}$ has the following form:
\begin{align}\label{A: n=n}
\begin{pmatrix}
0 & 0 &0 &\cdots &0 & 0 & a_{1,n} \\
0 & 0 &0 & \cdots  & 0 & -a_{1,n} & a_{2,n} \\
0 & 0 & 0 & \cdots &  a_{1,n} & -a_{2,n} & a_{3,n} \\
\vdots &  \vdots & \vdots &  \iddots & \vdots & \vdots & \vdots \\
0 & 0 & (-1)^{n-3} a_{1,n} &\cdots & a_{n-4,n} & -a_{n-3,n} & a_{n-2,n}\\
0 & (-1)^{n-2}a_{1,n} &(-1)^{n-3} a_{2,n}&  \cdots & a_{n-3,n} & -a_{n-2,n} & a_{n-1,n}\\
(-1)^{n-1} a_{1,n} & (-1)^{n-2} a_{2,n} & (-1)^{n-3} a_{3,n} & \cdots & a_{n-2,n} & -a_{n-1,n} & a_{n,n}
\end{pmatrix}
\end{align}

An analysis of the matrices of the form (\ref{A: n=n}) yields the following propositions.

%%%% PROPOSITIONS DESCRIBING THE UNIPOTENT INVARIANT MATRICES %%%%

\begin{Proposition}\label{P:symmetric regular case}
Let $A=(a_{i,j})_{i,j=1}^n$ be a symmetric matrix satisfying $A N + N^\mathsf{T} A= 0$. Then
\begin{enumerate}
%\item if $n=2k+1$ for some $k\in \N$, then for $l\in [k]$, the $2l$-th anti-diagonal of $A$ consists of zeros only, 
%and furthermore, $a_{2l-1,n},-a_{2l-1,n},\cdots, (-1)^{n-2l+1} a_{2l-1,n}$ are top-to-bottom entries of $(2l-1)$-st anti-diagonal of $A$.
%\item If $n=2k$ for some $k\in \N$, then for $l\in [k]$, the $(2l-1)$-st anti-diagonal of $A$ consists of zeros only, 
%and furthermore, $a_{2l,n},-a_{2l,n},\cdots, (-1)^{n-2l} a_{2l,n}$ are top-to-bottom entries of $2l$-th anti-diagonal of $A$.
\item If $n$ is even, then $A$ has the form \eqref{A: n=n} with $a_{2l,n} = 0$ for all $l$.
\item If $n$ is odd, then $A$ has the form \eqref{A: n=n} with $a_{2l+1,n} = 0$ for all $l$.
\end{enumerate}
\end{Proposition}
%%%%%%%%%%%%
\begin{Proposition}\label{P:anti-symmetric regular case}
If $A=(a_{i,j})_{i,j=1}^n$ is a skew-symmetric matrix satisfying $A N + N^\mathsf{T} A= 0$, then
\begin{enumerate}
%\item if $n=2k+1$ for some $k\in \N$, then for $l\in[k+1]$, the $(2l-1)$-st anti-diagonal of $A$ consists of zeros only, 
%and furthermore, $a_{2l,n},-a_{2l,n},\cdots, (-1)^{n-2l} a_{2l,n}$ are top-to-bottom entries of $2l$-th anti-diagonal of $A$.
%\item If $n=2k$ for some $k\in \N$, then for $l\in [k]$, the $2l$-th anti-diagonal of $A$ consists of zeros only, 
%and furthermore, $a_{2l-1,n},-a_{2l-1,n},\cdots, (-1)^{n-2l+1} a_{2l-1,n}$ are top-to-bottom entries of $(2l-1)$-st anti-diagonal of $A$.
\item If $n$ is even, then $A$ has the form \eqref{A: n=n} with $a_{2l+1,n} = 0$ for all $l$.
\item If $n$ is odd, then $A$ has the form \eqref{A: n=n} with $a_{2l,n} = 0$ for all $l$.
\end{enumerate}
\end{Proposition}

\subsection{One-parameter subgroups}
\label{SS:sl2triplet}

Let $T\subseteq \mt{SL}_n$ be a maximal torus.
Define a multiplicative one parameter subgroup (called {\em 1-PSG}, for short)
$\phi_n : \C^*  \rightarrow T \subset \mt{SL}_n$ by

\vspace{.4cm}

\noindent if $n=2k+1$, then $\phi_n (s) =:diag(s^k,\dots, s,1,s^{-1},\dots,s^{-k} )=diag(s^{k-i+1})_{i=1}^{2k+1}$;

\noindent if $n=2k$, then $\phi_n (s) =diag(s^{2k-1},s^{2k-3},\dots, s^1,s^{-1},\dots,s^{-(2k-1)} )= diag(s^{2k-(2i-1)})_{i=1}^{2k}$.

\vspace{.4cm}

Let $t\in \C$, and define $u_n(t): = \exp (tN_{(n)}) \in \mt{SL}_n$, an additive 1-PSG.
It is straightforward to calculate
\begin{align*}
u_n(t)=
\begin{pmatrix}
1 & t & \frac{t^2}{2!} &  \cdots & \frac{t^{n-2}}{(n-2)!} & \frac{t^{n-1}}{(n-1)!} \\
0 & 1 & t & \cdots & \frac{t^{n-3}}{(n-3)!} & \frac{t^{n-2}}{(n-2)!} \\
0 & 0 & 1 & \cdots & \frac{t^{n-4}}{(n-4)!} & \frac{t^{n-3}}{(n-3)!}\\
\vdots & \vdots & \vdots & \ddots & \vdots & \vdots  \\
0 & 0 & 0 & \dots & 1 & t \\
0 & 0 & 0 & \dots & 0 & 1
\end{pmatrix}.
\end{align*}
A straightforward matrix multiplication shows that $\phi_n(s) u_n(t) (\phi_n(s))^{-1}$ is equal to $u_n(st)$ if $n$ is odd and is equal to $u_n(s^2t)$ if $n$ is even.

\begin{Remark}
Let $N$ be an arbitrary nilpotent matrix with entries in $\C$, $u = \exp N$, and $U = \{ \exp(t N) : t \in \C \}$.
If $X$ is any complete variety on which $U$ acts, then $X^u = X^U$.
\end{Remark}

\subsection{Embeddings}
\label{SS:embeddings}

Let $G$ denote $\mt{SL}_n$, $B\subset G$ the subgroup of upper triangular matrices and let $T\subset B$ be the subgroup of diagonal matrices.
A closed subgroup $P$ of $G$ is called a {\em standard parabolic subgroup}, if $B\subseteq P$.
More generally, a subgroup $P'$ is called {\em parabolic}, if there exists $g\in G$ such that $P'=g Pg^{-1}$
for some standard parabolic subgroup $P\subset G$.

There exists a one-to-one correspondence between the standard parabolic subgroups of $G$ and the subsets
of $[n-1]$ given by:
$$
I\rightsquigarrow P_I:= B W_I B,
$$
where $W_I$ is the subgroup of the symmetric group $S_n$ generated by the simple transpositions $s_i = (i, i+1)$ for $i \in I$.
With this notation, the quotient space $G/P_I$ is called the {\em partial flag variety} of type $I$.
We need the following interpretation of $G/P_I$.

Let $I^c=\{j_1,\dots, j_{s}\}$ be the complement of $I$ in $[n-1]$.
Then $G/P_I$ is identified with the set of all nested sequences of vector spaces (flags) of the form
$$
\mc{F}:\ 0 \subset V_{j_1} \subset V_{j_2} \subset \cdots \subset V_{j_s} \subset \C^n,
$$
where $\dim V_{j_k} = j_k$, $k=1,\dots, s$.
The {\em type} of a flag $\mc{F}$ is defined to be the set $\{ j_1,\dots, j_s\}$, or equivalently, it is the composition $(j_1, j_2 - j_1, \dots, j_s - j_{s-1}, n - j_s)$ of $n$.

Finally, let us mention the fact that $G/P_I$ has a canonical decomposition into $B$-orbits where the orbits are indexed by
the elements of $W^I$, the minimal length coset representatives.
Furthermore, $T$ acts on $G/P_I$ and each $B$-orbit contains a unique $T$-fixed flag.
For more details, see \cite{Humphreys}.

We briefly review the theory of wonderful embeddings, referring the reader to \cite{DP83} for more details.

For us, a symmetric space is a quotient of the form $G/K$, where $G$ is an algebraic group and $K$ is the normalizer of
the fixed subgroup of an automorphism $\sigma: G \rightarrow G$ of order 2.
When $G$ is a semi-simple simply connected algebraic group (over an algebraically closed field), there exists a unique minimal
smooth projective $G$-variety $X=X_{G/K}$, called the {\em wonderful embedding} of $G/K$, such that
\begin{enumerate}
\item $X$ contains an open $G$-orbit $X_0$ isomorphic to $G/K$;
\item $X - X_0$ is the union of finitely many $G$-stable smooth codimension one subvarieties $X_i$ for $i = 1, 2, \dots, r$;
\item for any $I \subset [r]$, the intersection $X^I := \bigcap_{i \notin I} X_i$ is smooth and transverse;
\item every irreducible $G$-stable subvariety has the form $X^I$ for some $I \subset [r]$;
\item for $I\subset [r]$, let $\ms{O}^I$ denote the dense $G$-orbit in $X^I$.
There is a fundamental decomposition
\begin{equation}\label{E:orbit union}
X^I = \bigsqcup_{K \subset I} \ms{O}^K.
\end{equation}
Consequently, $G$-orbit closures form a Boolean lattice and there exists a unique closed orbit $Z$ corresponding to $I = \emptyset$.

\end{enumerate}

The recursive structure of wonderful embeddings is apparent from the following observation:

For each $I\subset [r]$, there exists a parabolic subgroup $P_I$ and $G$-equivariant fibrations $\pi: \ms{O}^I \rightarrow G/P_I$
and $\overline{\pi}_I : X^I \rightarrow G/P_I$ such that the following diagram commutes:

\begin{figure}[htp]
\centering
\begin{tikzpicture}[scale=.45]
\begin{scope}
\node at (-4,0) (a) {$\ms{O}^I$};
\node at (4,0) (b) {$X^I$};
\node at (0,-4.5) (c) {$G/P_I$};
\node at (-2.5,-2.5) (d) {$\pi_I$};
\node at (2.6,-2.5) (d) {$\overline{\pi}_I$};
\end{scope}
\begin{scope}
\draw[right hook->, thick] (a) to (b);
\draw[->, thick] (a) to (c);
\draw[->, thick] (b) to (c);
\end{scope}
\end{tikzpicture}
\end{figure}
\noindent
It turns out that the fiber of $\pi$ is $L/N$, where $L$ is the semi-simple part of the Levi subgroup of $P_I$ and $N$ is the
normalizer in $L$ of the fixed subgroup of $\theta$ induced on $L$.
Moreover, the fiber $(\overline{\pi}_I)^{-1}(x)$ over a point $x \in G/P_I$ is isomorphic to a wonderful embedding of $\pi_I^{-1}(x)$.

There is more to be said about the parametrizing set $[r]$ of the $G$-orbits.
We choose a maximal torus $S$ consisting of semi-simple elements $x\in G$ such that $\theta(x) = x^{-1}$.
Let $T$ be a maximal torus containing $S$. Then $T$ is automatically $\theta$-stable.
Denote by $\Phi$ the root system for $T$ in $G$. Then $\theta$ induces an involution on $\Phi$, which we denote by
$\theta$, also.
Let $\Phi_0 \subseteq \Phi$ denote the set of roots that are fixed by $\theta$, and let $\Phi_1$ denote the complementary
set of moved roots.
We choose a Borel subgroup $B$ containing $T$ such that the corresponding set of positive roots $\Phi^+$
satisfy $\theta (\Phi^+ \cap \Phi_1 ) \subseteq - \Phi^+$.

There is a natural restriction map $\text{res}: X^*(T) \rightarrow X^*(S)$ and the image of $\Phi_1$ gives a not necessarily reduced
root system in $X^*(S) \otimes_\Z \R$.
Let $\Delta$ denote the set of simple roots of $\Phi^+$ and set $\Delta_1 = \Delta \cap \Phi_1$.
We call the image $\overline{\Delta} = \text{res}(\Delta_1)$, the set of {\em simple restricted roots.}

Finally, let $\Pi$ denote the set of fixed simple roots $\Phi_0 \cap \Delta$. We denote by $P_\Pi$ the corresponding
parabolic subgroup of $G$.
The unique closed orbit is isomorphic to $G/P_\Pi$.

%%%% EXAMPLE : COMPLETE QUADRICS

\subsection{Example: Complete Quadrics}\label{S:complete quadrics}

There is a vast literature on the variety of complete quadrics.  See \cite{Laksov87} for a survey.
We briefly recall the necessary definitions.

Two non-degenerate $n\times n$ symmetric matrices represent the same non-degenerate quadric hypersurface
in $\PP^{n-1}$ if and only if they differ by multiplication by a scalar; hence $\PP(\Sym_n)$ is the moduli of such
quadric hypersurfaces.
In this context, the classical definition of the variety of complete quadrics in $\PP^{n-1}$
(see \cite{Schubert, Semple48, Tyrrell56}) is as the closure of the image of the map
\begin{align}\label{SST}
[A] \mapsto ([\Lambda^1(A)], [\Lambda^2(A)], \dots, [\Lambda^{n-1}(A)]) \in \prod_{i=1}^{n-1} \PP(\Lambda^i(\text{Sym}_n)), \ [A]\in \PP(\Sym_n^0).
\end{align}
In more modern group-theoretical language, the variety of complete quadrics is the wonderful embedding 
$\mathcal{M}_{\text{Q}}=\mathcal{M}_{\mt{SL}_n / Z(\mt{SL}_n) \mt{SO}_n}$ of the symmetric space $\mt{SL}_n / Z(\mt{SL}_n) \mt{SO}_n$.

Let $I\subseteq [n-1]$ and let $P_I$ be the corresponding standard parabolic subgroup.
It follows from results of Vainsencher \cite{Vainsencher82} that a point $\mathcal{P} \in \mathcal{M}_{\text{Q}}$ is described by the data of a flag
\begin{equation}\label{E:flag}
\mathcal{F}: V_0 = 0 \subset V_1 \subset \dots \subset V_{s-1} \subset V_s =\C^n
\end{equation}
and a collection $\mathcal{Q} = (Q_1, \dots Q_s)$ of quadrics $Q_i \subseteq \PP(V_i)$ whose singular locus
is $\PP(V_{i-1})$.  Moreover, $\mc{O}^I$ consists of complete quadrics whose flag $\mathcal{F}$ is of type $I$
and the map $(\mathcal{F}, \mathcal{Q}) \mapsto \mathcal{F}$ is the $\mt{SL}_n$-equivariant projection
\begin{equation}\label{E:pi_K}
\overline{\pi}_I : \mathcal{M}_{\text{Q}}^I \rightarrow \mt{SL}_n / P_I.
\end{equation}
The fiber of $\overline{\pi}_I$ over $\mathcal{F} \in \mt{SL}_n / P_I$ is isomorphic to a product of varieties of complete quadrics of
smaller dimension.

Let $\theta_o : \mt{SL}_n \rightarrow \mt{SL}_n$ denote the automorphism $\theta_o (g) = (g^{-1})^\top$. Then $\theta_o$ is
an involution and its fixed subgroup is precisely $H=\mt{SO}_n$.
In this case, the set of moved roots and the set of simple roots $\Delta$ of $T$ coincide. Here $T$ is the
maximal torus of diagonal matrices in $\mt{SL}_n$.
The unique closed orbit, therefore, corresponds to the empty set of roots, and hence, it is isomorphic to
the flag variety $G/B$.

Furthermore, there is a one-to-one correspondence between the subsets of $\Delta_1 = \Delta$ and the $G$-orbits.
It follows for each standard parabolic subgroup $P_I\subseteq G$, $I\subset \Delta$, there exists
$G$-equivariant fibration $\pi: \ms{O}^I \rightarrow G/P_I$ with fibers isomorphic to $L_I/N_I$,
where $L_I$ is the semi-simple part of the Levi subgroup of $P_I$ and $N_I$ is the
normalizer in $L_I$ of the fixed subgroup of $\theta$ induced on $L_I$.

On the other hand, we know that the standard Levi subgroups in $G=\mt{SL}_n$ are of the form $\prod_{j=1}^k \mt{SL}_{m_j}$,
where $\sum m_j = n$. Since the $\theta$-fixed subgroup of such a group is isomorphic to $\prod_{j=1}^k \mt{SO}_{m_j}$,
a fiber of $\pi$ has to form $\prod_{j=1}^k \Sym_{m_j}^0$.

\subsection{Example: Complete Skew Forms}\label{S:complete skew forms}

{\em Notation:} From now on, $n$ is assumed to be an even number whenever we deal with a symplectic group.

The classical construction of the wonderful compactification of $\mt{SL}_{n} / Z(\mt{SL}_{n}) \mt{Sp}_{n}$, 
$\mathcal{M}_{\text{S}}=\mathcal{M}_{\mt{SL}_{n} / Z(\mt{SL}_{n}) \mt{Sp}_{n}}$, is similar to that of $\mathcal{M}_{\text{Q}}$; 
instead of symmetric matrices in (\ref{SST}) one needs to use 2-forms ($n \times n$ skew-symmetric matrices).

The group-theoretical definition parallels that of $\mathcal{M}_{\text{Q}}$ as well.
A point $\mathcal{P} \in \mathcal{M}_{\text{S}}$ is given by the data of a flag of even dimensional subspaces
\begin{equation}\label{E:flag}
\mathcal{F}: V_0 = 0 \subset V_1 \subset \dots \subset V_{s-1} \subset V_s =\C^n
\end{equation}
and a collection $\mathcal{W} = (\omega_1, \dots \omega_s)$ of 2-forms, where $\omega_i$ is a 2-form in $\PP(V_i)$ whose
null space is $\PP(V_{i-1})$. Moreover, $\ms{O}^I$ consists of complete 2-forms whose flag $\mathcal{F}$ is of type $I$
and the map $(\mathcal{F}, \mathcal{W}) \mapsto \mathcal{F}$ is the $\mt{SL}_{n}$-equivariant projection
\begin{equation}\label{E:pi_K}
\overline{\pi}_I : \mathcal{M}_{\text{S}}^I \rightarrow \mt{SL}_{n} / P_I.
\end{equation}
The fiber of $\overline{\pi}_I$ over $\mathcal{F} \in \mt{SL}_{n} / P_I$ is isomorphic to a product of wonderful 
compactifications of spaces of complete skew forms of smaller dimension.

\section{\textbf{Fixed Subvarieties}}
\label{S:fixedsubvarieties}

In this section we describe the fixed subvarieties of a regular unipotent element operating on
complete quadrics and complete skew forms.

\begin{Definition}\label{dfn:u fixed and N stable flags}
Let
$$
\mathcal{F} : 0 = V_0 \subset V_1 \subset V_2 \subset \cdots \subset V_k = V
$$
be a (partial) flag of $V = \mathbb{C}^n$.  For any $g \in \mt{SL}_n$, the flag $\mathcal{F}$ is $g$-fixed if and only if 
$g(V_i) = V_i$ for all $1 \leq i \leq k$.  For any $n \times n$ matrix $M$, the flag $\mathcal{F}$ is $M$-stable if and only if 
$M(V_i) \subseteq V_{i-1}$ for all $1 \leq i \leq k$.
\end{Definition}

\begin{Remark}
Let $N$ be a nilpotent $n \times n$ matrix and $u = \exp(N)$ the corresponding unipotent element of $\mt{SL}_n$. 
Then a flag $\mathcal{F}$ of $\mathbb{C}^n$ is $u$-fixed if and only if it is $N$-stable.
\end{Remark}

\begin{Lemma}\label{lemma:u fixed flags}
Let $V = \C^n$ with standard basis $\{ e_1, e_2, \dots, e_n \}$, and let
$$
N = N_{(n)} = \begin{pmatrix} 0 & 1 & 0 & \cdots & 0 \\ 0 & 0 & 1 & \cdots & 0 \\ \vdots & \vdots & \vdots & \ddots & \vdots \\ 0 & 0 & 0 & \cdots & 0 \end{pmatrix}
$$
be the regular nilpotent operator for this basis. Set $u = \exp(N)$.
Then for each composition $\gamma$ of $n$, there is a unique flag of type $\gamma$ that is fixed by $u$,
namely the sub-flag of type $\gamma$ of the standard complete flag
$0 \subset \C e_1 \subset \C e_1 \oplus \C e_2 \subset \cdots \subset \C e_1 \oplus \C e_2 \oplus \dots \oplus \C e_n = V$.
\end{Lemma}

\begin{proof}
We prove this by induction on the number $k$ of parts of the composition $\gamma$.
Let $v$ be any non-zero vector in $V_1$ and let $j$ be the largest integer such that $v$ contains a non-zero $e_j$ component.
Then $v, N(v), N^2(v), \dots, N^{j-1}(v)$ are $j$ linearly independent vectors in $V_1$.
Since $\dim(V_1) = \gamma_1$, this implies $j \leq \gamma_1$.
Thus $V_1 = \C e_1 \oplus \C e_2 \oplus \dots \oplus \C e_{\gamma_1}$.
Now apply the inductive hypothesis to the induced flag obtained by quotienting
$\mathcal{F}$ by $V_1$, noting that the induced nilpotent transformation on $V / V_1$ has
Jordan type $(n - \gamma_1)$, allowing the induction to proceed.
\end{proof}

\begin{Remark}\label{R:evenorodd}

Let $u= \exp(N)$ be as in Lemma \ref{lemma:u fixed flags}, and let
$\mathcal{F}: 0 \subset V_1 \subset V_2 \subset \dots \subset V_k = \C^n$ be an $N$-stable flag (or, equivalently, 
$u$-fixed) of type $\gamma =(\gamma_1,\dots, \gamma_k)$.
Then restriction of $N$ to $V_i / V_{i - 1}$ is another principal nilpotent operator and its Jordan type is $(\gamma_i)$.
It follows from Section \ref{SS:unipotentinvariantmatrices} that, if $\dim (V_i / V_{i-1})$ is even for some
$1\leq i \leq k$, then there are no $u$-fixed quadrics in $\PP(V_i / V_{i-1})$. Consequently,
there cannot be any $u$-fixed complete quadrics on such a flag $\mc{F}$.
On the other hand, if $\dim (V_i / V_{i-1})$ is odd for some
$1\leq i \leq k$, then there are no $u$-fixed 2-forms in $\PP(V_i / V_{i-1})$, hence no $u$-fixed complete skew-forms on such a flag.

\end{Remark}

We are ready to give a set theoretic description of the $u$-fixed subvarieties of our compactifications.

\begin{Proposition}\label{P:decompositionintoaffinesets}
The unipotent fixed subvariety $\mathcal{M}_{\text{Q}}^{(n)}$ has a decomposition into affine spaces indexed
by the compositions of $n$ with no even parts. Similarly, the unipotent fixed subvariety $\mathcal{M}_{\text{S}}^{(n)}$
has a decomposition into affine spaces indexed by the compositions of $n$ with no odd parts.
\end{Proposition}

\begin{proof}

We begin with the case $\mathcal{M}_{\text{Q}}^{(n)}$.
Let $(\mc{F},\mc{Q})$ be a point of $\mathcal{M}_{\text{Q}}^{(n)}$ with $\mc{F}$ of type $\gamma =(\gamma_1,\dots, \gamma_k)$.
Then by Remark \ref{R:evenorodd} we know that each $\gamma_i$ ($1\leq i \leq k$) is odd.
It follows from Proposition \ref{P:symmetric regular case} (1) that the space of non-degenerate $u$-fixed quadrics on 
$\PP(V_i / V_{i - 1})$ is an affine space of dimension $(\gamma_i - 1)/2$.

Let $\mathcal{M}_{\text{Q}}^{(n),\gamma} \subseteq \mathcal{M}_{\text{Q}}^{(n)}$ denote the subspace
consisting of complete quadrics of flag type $\gamma$. Clearly,
$$
\mathcal{M}_{\text{Q}}^{(n)} = \bigsqcup_{\gamma} \mathcal{M}_{\text{Q}}^{(n),\gamma},
$$
where the union is over all compositions $\gamma = (\gamma_1,\dots, \gamma_k)$ of $n$ with odd parts only.
Moreover, the above discussion implies that $\mathcal{M}_{\text{Q}}^{(n),\gamma}$ is an affine space:
$$
\mathcal{M}_{\text{Q}}^{(n),\gamma} = \A^{(\gamma_1-1)/2} \times \cdots \times \A^{(\gamma_k-1)/2} \cong \A^{(n-k)/2}.
$$
The argument for $\mathcal{M}_{\text{S}}^{(n)}$ is similar, so we write the end result:
$$
\mathcal{M}_{\text{S}}^{(n)} = \bigsqcup_{\beta} \mathcal{M}_{\text{S}}^{(n),\beta},
$$
where the union is over all compositions $\beta = (\beta_1,\dots, \beta_k)$ of $n$ with even parts only,
and each $\mathcal{M}_{\text{S}}^{(n),\beta}$ is an affine space:
$$
\mathcal{M}_{\text{S}}^{(n),\beta} = \A^{\beta_1/2-1} \times \cdots \times \A^{\beta_k/2-1} \cong \A^{n/2-k}.
$$
\end{proof}

As an immediate corollary of the proof of Proposition \ref{P:decompositionintoaffinesets} we have
\begin{Corollary}
$\dim \mathcal{M}_{\text{Q}}^{(n)}  = \lfloor (n-1)/2 \rfloor$, and $\dim \mathcal{M}_{\text{S}}^{(n)} = n/2 -1$.
\end{Corollary}

\section{\textbf{Cell Decompositions}}
\label{S:celldecompositions}

\subsection{$\C^*$ action on matrices}
\label{SS:torusactiononmatrices}

Let $T\subset \mt{SL}_n$ denote the maximal torus of diagonal matrices.
We compute the stabilizer of $T$ acting on $X^{(n)}$, where $X$ is any of the spaces $X=\Mat_n,X= \Skew_n$, or $X=\Sym_n$.

Let $Y$ denote the set of invertible elements from $X$.
By \eqref{A: n=n} and Propositions \ref{P:symmetric regular case} and \ref{P:anti-symmetric regular case}, the set $Y^{(n)}$ is non-empty 
if and only if the main anti-diagonal of a generic element from $Y$ has no zero entries.
If $Y= \mt{GL}_n$, this is independent of $n$; however, by Propositions \ref{P:symmetric regular case} and \ref{P:anti-symmetric regular case}, 
if $Y= \Sym_n^0$, then $n$ has to be odd, and if $Y=\Skew_n^0$, then $n$ must be even.

Let $t \in T$ be such that $t^{-1} = \text{diag} ( t_1,\dots, t_n )$, $t_i \in \C^*$, and
let $A= (a_{i,j}) \in Y^{(n)}$ be a generic element. It is straightforward to verify that
\begin{align}\label{pairwise}
t\cdot A = (t_i t_j a_{i,j})_{i,j=1}^n \in Y^{(n)} \iff t_i t_j = t_k t_l\ \text{for all} \ i+j = k+l.
\end{align}

\begin{Example}
Let $A$ be a generic element of $X^{(6)}=\Mat_6^{(6)}$. Then
\begin{equation*}\label{t dot A}
t \cdot A = (t^{-1})^\top A (t^{-1})=
\begin{pmatrix}
0 & 0 & 0 & 0 & 0 & t_1 t_6 a_{11} \\
0 & 0 & 0 & 0 & -t_2 t_5 a_{11} & t_2 t_6 a_{12} \\
0 & 0 & 0 & t_3 t_4 a_{11} & -t_3 t_5 a_{12} & t_3 t_6 a_{13} \\
0 & 0 & -t_4 t_3 a_{11} & t_4^2 a_{12} & -t_4 t_5 a_{13} & t_4 t_6 a_{14} \\
0 & t_5 t_2 a_{11} & -t_5 t_3 a_{12} & t_5 t_4 a_{13} & -t_5^2 a_{14} & t_5 t_6 a_{15} \\
-t_6 t_1 a_{11} & t_6 t_2 a_{12} & -t_6 t_ 3 a_{13} & t_6 t_4 a_{14} & -t_6 t_5 a_{15} & t_6^2 a_{16}
\end{pmatrix}.
\end{equation*}
\end{Example}

\vspace{1cm}

Let $\phi_n : \C^*  \rightarrow T \subset \mt{SL}_n$ denote the 1-PSG defined in Section \ref{SS:sl2triplet}.
Observe that if $a_{i,j}$ is the $(i,j)$-th entry of $A\in X$, then the $(i,j)$-th entry of
$\phi_n(s)\cdot A$ is $s^{i+j - (n+1)} a_{i,j}$.
Therefore, both $X^{(n)}$ and $Y^{(n)}$ are stable under $\C^*$ action defined by $\phi_n(s)$.
Moreover, $A\in Y^{(n)}$ is a fixed point of this $\C^*$-action if and only if the only non-zero entries of $A$ appear along its main anti-diagonal.

\begin{Example}
Let $\phi_6(s) = \text{diag} (s^{5},s^{3},s^{1},s^{-1},s^{-3},s^{-5})$, and
let $A\in X^{(6)}$, $X=\Skew_6$ denote a generic element, which is of the form
\begin{equation*}
A=
\begin{pmatrix}
0 & 0 & 0 & 0 & 0 & a_{11} \\
0 & 0 & 0 & 0 & -a_{11} & 0 \\
0 & 0 & 0 & a_{11} & 0 & a_{13} \\
0 & 0 & -a_{11} & 0 & -a_{13} & 0  \\
0 & a_{11} & 0 &  a_{13} & 0 &  a_{15} \\
-a_{11} & 0 &  -a_{13} &  0 &  -a_{15} & 0
\end{pmatrix}.
\end{equation*}
Then
\begin{align*}
s \cdot A = \phi(s^{-1})^\top A \phi(s^{-1})=\begin{pmatrix}
0 & 0 & 0 & 0 & 0 & a_{11} \\
0 & 0 & 0 & 0 & -a_{11} & 0 \\
0 & 0 & 0 & a_{11} & 0 & s^{4}a_{13} \\
0 & 0 & -a_{11} & 0 & -s^{4}a_{13} & 0  \\
0 & a_{11} & 0 &  s^{4}a_{13} & 0 &  s^8 a_{15} \\
-a_{11} & 0 &  -s^{4}a_{13} &  0 &  -s^8 a_{15} & 0
\end{pmatrix}.
\end{align*}

\end{Example}

\vspace{1cm}

\begin{Lemma}\label{L:torusfixed-classicalcase}
Let $\phi_n : \C^* \rightarrow T$ be defined as in Section \ref{SS:sl2triplet}.
Let $Y$ be either $\Skew_n^0$ or $\Sym_n^0$.
Then $\C^*$ acts on $\PP(Y^{(n)})$ via $\phi_n$ with a unique fixed point.
Moreover, the unique $\C^*$-fixed point $x \in \PP(Y^{(n)})^{\C^*}$ is
$$
x = \begin{bmatrix}
0 & 0 &  \cdots & 0 & 1 \\
0 & 0 & \cdots  & -1& 0 \\
\vdots &  \vdots &  \iddots & \vdots & \vdots \\
0 & 1 &  \cdots &  0 & 0 \\
-1 & 0&  \cdots & 0 & 0
\end{bmatrix}.
$$
\end{Lemma}

\subsection{Carrell-Goresky's Bia{\l}ynicki-Birula cell decomposition}

Under a ``good'' $\C^*$-action, a variety decomposes into affine subspaces, and the classes of the closures of these affine subspaces form a basis
for the Chow ring. We briefly recall certain aspects of this well developed theory of torus actions.

\begin{Theorem}\cite[Theorem 4.3]{BB73}\label{thm:BBdecomp}
Let $Z$ be a smooth, irreducible projective variety with a $\C^*$-action such that the fixed point set is finite.
Then
\begin{enumerate}
\item For each fixed point $x \in Z^{\C^*}$, the set
$$
C_x := \{ y \in Z : \lim_{t \rightarrow 0} t \cdot y = x \}
$$
is an affine space.\label{bb item 1}
\item The closures $\overline{C_x}$ of the cells $C_x$ form an additive basis for the
Chow ring (as well as the integral cohomology ring) of $Z$.\label{bb item 2}
\end{enumerate}
\end{Theorem}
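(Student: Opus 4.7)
The plan is to prove the two parts of the theorem separately. For part~(1), I would first invoke the properness of $Z$ to extend each orbit map $t \mapsto y\cdot t$ from $\mathbb{G}_m$ to the affine line $\AAA^1$ via the valuative criterion, so that $\lim_{t\to 0} y\cdot t$ exists and must itself be $T'$-fixed. This immediately produces the set-theoretic partition $Z = \bigsqcup_{x\in Z^{T'}} C_x$.

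Next, at each fixed point $x$ the torus $T'$ acts on the tangent space $T_x Z$, which decomposes into weight subspaces $T_x Z = T_x^+ \oplus T_x^0 \oplus T_x^-$ according to the sign of the weights. Since $x$ is isolated in $Z^{T'}$ and $Z$ is smooth, the zero-weight part $T_x^0$ vanishes, as a nontrivial zero-weight direction would produce a positive-dimensional fixed locus through $x$. I would then produce $T'$-equivariant \'etale coordinates on an affine neighborhood $U_x$ of $x$ in which the action linearizes, via an equivariant slice argument. In these coordinates the stable locus $C_x \cap U_x$ is exactly the positive-weight coordinate subspace, an affine space of dimension $\dim T_x^+$. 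To globalize, every $y \in C_x$ can be moved into $U_x$ by applying $\lambda(t)$ for sufficiently small $t$, so $T'$-sweeping the local picture exhausts $C_x$ and yields $C_x \cong \AAA^{\dim T_x^+}$. The main technical obstacle is the construction of the equivariant local linearization; once it is in place, identifying $C_x$ globally with an affine space follows routinely from the flow argument.

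For part~(2), I would enumerate the fixed points $x_1,\dots,x_N$ in an order refining the partial order in which $x_i \preceq x_j$ iff $C_{x_i} \subseteq \overline{C_{x_j}}$; such a refinement exists because $\overline{C_{x_j}} \setminus C_{x_j}$ is a union of strictly lower-dimensional cells (indeed, it is $T'$-stable and contains no fixed point other than those below $x_j$). Setting $Z_j := \bigcup_{i\leq j} \overline{C_{x_i}}$ then gives an ascending filtration of $Z$ by closed subvarieties with $Z_j \setminus Z_{j-1} = C_{x_j}$. Running the localization long exact sequence in Chow groups, and separately in compactly supported cohomology, for each pair $(Z_j, Z_{j-1})$ and using that an affine space $C_{x_j} \cong \AAA^{d_j}$ contributes a single generator concentrated in a single degree, an induction on $j$ yields that the cell-closure classes $[\overline{C_{x_j}}]$ form an additive basis. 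The delicate point here is verifying that the filtration really is closed at each stage, which is exactly what the refinement of the closure order guarantees.
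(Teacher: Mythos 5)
The paper does not prove this statement: it is quoted as a classical result of Bia{\l}ynicki-Birula, cited as \cite[Theorem 4.3]{BB} and then fed into the Carrell--Goresky theorem, so there is no internal argument to compare yours against. Judged on its own terms, your sketch of part (1) follows the standard route --- existence of limits by properness, vanishing of the zero-weight subspace at an isolated fixed point of a smooth variety, equivariant local linearization, and sweeping the local chart by the flow --- and you correctly identify the equivariant linearization as the genuinely hard step; that is indeed where the real content of the original proof lies, and your sketch is acceptable as an outline provided that step is supplied.

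Part (2), however, contains a genuine gap. You justify the existence of a closed filtration refining the closure order by asserting that $\overline{C_{x_j}} \setminus C_{x_j}$ is a union of strictly lower-dimensional cells. This is false in general: the closure of a Bia{\l}ynicki-Birula cell need not be a union of cells, and the incidence relation ``$C_{x_i}$ meets $\overline{C_{x_j}}$'' need not be acyclic for the dimension reason you give, since a cell of larger dimension can intersect the boundary of a smaller one without being contained in it. Filtrability of the plus-decomposition for projective $Z$ is a separate theorem; the standard proof equivariantly embeds $Z$ into a projective space with a linearized $T'$-action and orders the fixed points by the weights of $T'$ on the fibers of an ample line bundle at those points, which forces $\overline{C_{x_j}}$ to lie in the union of cells attached to fixed points of smaller or equal weight. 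Once such a closed filtration is in hand, your localization long-exact-sequence induction for generation, run in parallel in Chow groups and in (compactly supported) cohomology to obtain freeness, is the standard and correct way to finish.
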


It turns out that the smoothness condition in the above theorem can be relaxed. 	
\begin{Theorem}\cite[Theorem 1]{CG83}\label{thm:CG}
Let $Z$ be a possibly singular, possibly reducible $\C^*$-variety whose fixed point set is finite.
If the first statement in Theorem \ref{thm:BBdecomp} holds, then so does the second statement.
\end{Theorem}

If the first statement of Theorem \ref{thm:BBdecomp} holds (hence the conclusion of Theorem \ref{thm:CG} holds), then
we call the decomposition $Z = \bigsqcup_{x \in Z^{\C^*}} C_x$ {\em a cell decomposition} of $Z$ with respect to the $\C^*$-action,
and refer to the affine sets $C_x$, $x\in Z^{\C^*}$, as the {\it cells} of the decomposition.

\subsection{Torus action on compactifications}

We consider extensions of the action given by $\phi_n$ (defined in Section \ref{SS:sl2triplet}) on matrices to $\mathcal{M}_{\text{Q}}$ and to $\mathcal{M}_{\text{S}}$.
Recall from the proof of Proposition \ref{P:decompositionintoaffinesets} that there are decompositions
\begin{align}\label{A:decompositions}
\mathcal{M}_{\text{Q}}^{(n)} = \bigsqcup_{\gamma} \mathcal{M}_{\text{Q}}^{(n),\gamma} \ \text{ and } 
\ \mathcal{M}_{\text{S}}^{(n)} = \bigsqcup_{\beta} \mathcal{M}_{\text{S}}^{(n),\beta},
\end{align}
where $\mathcal{M}_{\text{Q}}^{(n),\gamma}$ is the affine space consisting of unipotent invariant complete quadrics of flag type $\gamma$,
and $\mathcal{M}_{\text{S}}^{(n),\beta}$ is the affine space consisting of unipotent invariant complete skew-forms of flag type $\beta$.

\begin{comment}

We interpret these decompositions as follow. The affine space $\mathcal{M}_{\text{Q}}^{(n),\gamma}$ (resp., $M'^{(n),\gamma}$) is the intersection of 
$M^{(n)}$ (resp., $M'^{(n)}$) with the $\mt{SL}_n$-orbit of the wonderful compactification of 
$\mt{SL}_n / Z(\mt{SL}_n) \mt{SO}_n$ (resp. $\mt{SL}_n / Z(\mt{SL}_n) \mt{Sp}_n$) 
corresponding to the composition $\gamma$ (or equivalently, the subset $I \subset [n-1]$ consisting of the complement of 
$\gamma_1$, $\gamma_1 + \gamma_2$, \dots, $\gamma_1 + \gamma_2 + \cdots + \gamma_{k-1}$). 
Thus, the intersection of $M^{(n)}$ (resp., $M'^{(n)}$) with an $\mt{SL}_n$-orbit of the appropriate wonderful compactification is either empty or a cell.

\end{comment}

\begin{Theorem}
Let $\phi_n$ be the 1-PSG defined in Section \ref{SS:sl2triplet}.  Then $\C^*$ acts on the spaces
$\mathcal{M}_{\text{Q}}^{(n)}$ and $\mathcal{M}_{\text{S}}^{(n)}$ with finitely many fixed points.
The cells of the torus action are given by the decompositions (\ref{A:decompositions}).
\end{Theorem}

\begin{proof}

Let $e_1,\dots,e_n$ denote the standard ordered basis for $\C^n$, and let
$\mc{F}_s$ denote the standard flag $0\subset V_1 \subset V_2 \subset \cdots \subset V_n= \C^n$,
where $V_r$ is the $\C$-span of $e_1,\dots, e_r$.
Let $x=(\mc{F},\mc{P})$ be either from $\mathcal{M}_{\text{Q}}^{(n)}$ or from $\mathcal{M}_{\text{S}}^{(n)}$.
By Lemma \ref{lemma:u fixed flags}, we know that in order for $x$ to be fixed by $u= \exp N_{(n)}$
its flag $\mc{F}$ has to be a subflag of  $\mc{F}_s$.
Suppose $\mc{F}: \ 0 \subset V_{j_1} \subset \cdots \subset V_{j_k} = \C^n$, and suppose
$\mc{P}=(P_1,\dots, P_k)$ is the (symmetric or skew) form of $x$.
Then the $i$-th entry of $\mc{P}$ is a non-degenerate form (up to a scaler multiple) on the quotient space
$V_{j_i}/V_{j_i-1}$, and furthermore, it is fixed by the restriction of $u$ on $V_{j_i}/V_{j_i-1}$, which is a
regular unipotent operator of Jordan type $(\dim V_{j_i}-\dim V_{j_{i-1}})$.

It is straightforward to verify that $\phi_n (\C^*) \cdot \mc{F}= \mc{F}$.
Therefore, it remains showing that there are only finitely many $\mc{P}$ on $\mc{F}$ such that
$\phi(\C^*) \cdot \mc{P} = \mc{P}$. Note that this equality is true if and only if
the restriction of $\phi_n(\C^*)$ to each quotient space $V_{j_i}/V_{j_{i-1}}$ acts on the corresponding form $P_i$ trivially.
But we know from Lemma \ref{L:torusfixed-classicalcase} that
$P_i$ has to be represented by an anti-diagonal matrix of size $\dim V_{j_i}-\dim V_{j_{i-1}}$.
In either case, our conclusion is that there is only one $\C^*$-fixed point for each subflag $\mc{F}$ of
the standard flag $\mc{F}_s$.
Since $\C^*$-action preserves the flag type, and since the affine subsets
$\mathcal{M}_{\text{Q}}^{(n),\gamma}$, $\mathcal{M}_{\text{S}}^{(n),\beta}$ from (\ref{A:decompositions}) are
precisely the unipotent invariant complete forms of a given flag type,
by Theorem \ref{thm:CG} we see that the affine subsets of (\ref{A:decompositions}) must be the cells of the $\C^*$-action.

\end{proof}

\vspace{.5cm}

Let $Comp_{o}(n)$ denote the set of all compositions of $n$ with odd parts only,
and let $Comp_{e}(n)$ denote the set of all compositions of $n$ with even parts only.
\begin{Corollary}
The Poincar\'e polynomial $P_{\mathcal{M}_{\text{Q}}^{(n)}} (x)$ of $\mathcal{M}_{\text{Q}}^{(n)}$ is given by
$$
P_{\mathcal{M}_{\text{Q}}^{(n)}} (x) = \sum_{(\gamma_1,\dots,\gamma_k) \in Comp_o(n)} x^{ (n-k)/2 } =  \sum_{i=0}^n {n - 1 - i \choose i} x^i.
$$
\end{Corollary}
\begin{proof}
For simplicity let $p_n(x)$ denote the Poincar\'e polynomial $P_{\mathcal{M}_{\text{Q}}^{(n)}} (x)$.
The first equality follows from the proof of Proposition \ref{P:decompositionintoaffinesets}.
To prove the second equality, we first find a recurrence for $p_n(x)$. To this end,
observe that $Comp_o(n) = Comp_o(n)' \sqcup Comp_o(n)''$,
where $Comp_o(n)'$ consists of elements
$\gamma \in Comp_o(n)$ such that $\gamma_1 = 1$, and $Comp_o(n)''$ consists of those $\rho \in Comp_o(n)$
with $\rho_1 > 1$.
\begin{align*}
p_n(x) &=
\sum_{\gamma = (\gamma_1,\dots,\gamma_k) \in Comp_o(n)''} x^{ (n - k)/2  } +
\sum_{\rho=( \rho_1 \dots, \rho_l) \in Comp_o(n)''} x^{ (n - l)/2  }\\
&= \sum_{(\gamma_2,\dots,\gamma_k) \in Comp_o(n-1)} x^{ [(n-1) - (k-1)]/2  } +
\sum_{( \rho_1 -2 \dots, \rho_l) \in Comp_o(n-2)} x \cdot x^{ [(n - 2) - l]/2  },
\end{align*}
which simplifies to
\begin{equation}\label{A:recurrenceofp_n}
p_n(x) = p_{n-1}(x) + x p_{n-2} (x).
\end{equation}
It is easy to easy to check that $p_1(x) = p_2(x) = 1$.
Now, a simple induction argument combined with the well known recurrence
$$
{n - 1 - i \choose i} = {n - 2 - i \choose i - 1} + {n - 2 - i \choose i}
$$
together with (\ref{A:recurrenceofp_n}) give us the desired second equality.

\end{proof}

\begin{Remark}
It follows from (\ref{A:recurrenceofp_n}) and the initial conditions that the value at $x=1$ of the Poincar\'e
polynomial $P_{\mathcal{M}_{\text{Q}}^{(n)}}(x)$ is the $(n-1)$-st Fibonacci number.
\end{Remark}

\begin{Corollary}
Suppose $n=2m$.
Then the Poincar\'e polynomial $P_{\mathcal{M}_{\text{S}}^{(n)}} (x)$ of $\mathcal{M}_{\text{S}}^{(n)}$ is given by
$$
P_{\mathcal{M}_{\text{S}}^{(n)}} (x) = \sum_{(\beta_1,\dots,\beta_k) \in Comp_e(n)} x^{ n/2-k } = (1+x)^{m-1}.
$$
\end{Corollary}
\begin{proof}
The first equality follows from the proof of Proposition \ref{P:decompositionintoaffinesets}.
To prove the second equality observe that $Comp_e(n)$ is in bijection with $Comp(m)$, the set of all
compositions of $m$. Thus, the equality
\begin{align}\label{A:transition}
P_{\mathcal{M}_{\text{S}}^{(n)}} (x) = \sum_{(\beta_1,\dots,\beta_k) \in Comp (m) } x^{ m-k }
\end{align}
is obvious.

Recall the bijection between the set of all compositions of $m$ and the set of all subsets
of $[m-1]$: For $\gamma = (\gamma_1,\dots, \gamma_k)$ a composition of $m$, let $I_\gamma$
denote the complement of the set $I=\{ \gamma_1,\gamma_1+\gamma_2,\dots, \gamma_1+\cdots+\gamma_{k-1}\}$
in $[m-1]$. Then the assignment $\gamma \mapsto I_\gamma$ is the desired bijection between compositions
of $m$ with $k$ parts and the subsets of $[m-1]$ with $m-1- (k-1) = m-k$ elements.
Therefore, the right hand side of equation (\ref{A:transition}) is equal to $\sum_{I\subseteq [m-1]} x^{|I|}$, which is
obviously equal to $(1+x)^{m-1}$.

\end{proof}

\bibliography{References}

\begin{thebibliography}{10}

\bibitem{BB73}
A.~Bia{\l}ynicki-Birula.
\newblock Some theorems on actions of algebraic groups.
\newblock {\em Ann. of Math. (2)}, 98:480--497, 1973.

\bibitem{CHJ13}
Joyce M. Howe~R. Can, M.B.
\newblock Unipotent invariant matrices.
\newblock {\em Linear Algebra and its Applications}, To appear.

\bibitem{CG83}
J.~B. Carrell and R.~M. Goresky.
\newblock A decomposition theorem for the integral homology of a variety.
\newblock {\em Invent. Math.}, 73(3):367--381, 1983.

\bibitem{DP83}
C.~De~Concini and C.~Procesi.
\newblock Complete symmetric varieties.
\newblock In {\em Invariant theory ({M}ontecatini, 1982)}, volume 996 of {\em
  Lecture Notes in Math.}, pages 1--44, Berlin, 1983. Springer.

\bibitem{GoodmanWallach}
Roe Goodman and Nolan~R. Wallach.
\newblock {\em Symmetry, representations, and invariants}, volume 255 of {\em
  Graduate Texts in Mathematics}.
\newblock Springer, Dordrecht, 2009.

\bibitem{He06}
Xuhua He.
\newblock Unipotent variety in the group compactification.
\newblock {\em Adv. Math.}, 203(1):109--131, 2006.

\bibitem{HeLusztig06}
Xuhua He and George Lusztig.
\newblock Singular supports for character sheaves on a group compactification.
\newblock {\em Geom. Funct. Anal.}, 17(6):1915--1923, 2008.

\bibitem{HeThomsen06}
Xuhua He and Jesper~Funch Thomsen.
\newblock Closures of {S}teinberg fibers in twisted wonderful
  compactifications.
\newblock {\em Transform. Groups}, 11(3):427--438, 2006.

\bibitem{HottaShimomura}
R.~Hotta and N.~Shimomura.
\newblock The fixed-point subvarieties of unipotent transformations on
  generalized flag varieties and the {G}reen functions. {C}ombinatorial and
  cohomological treatments cent.
\newblock {\em Math. Ann.}, 241(3):193--208, 1979.

\bibitem{Humphreys}
James~E. Humphreys.
\newblock {\em Linear algebraic groups}.
\newblock Springer-Verlag, New York, 1975.
\newblock Graduate Texts in Mathematics, No. 21.

\bibitem{Laksov87}
D.~Laksov.
\newblock Completed quadrics and linear maps.
\newblock In {\em Algebraic geometry, {B}owdoin, 1985 ({B}runswick, {M}aine,
  1985)}, volume~46 of {\em Proc. Sympos. Pure Math.}, pages 371--387. Amer.
  Math. Soc., Providence, RI, 1987.

\bibitem{Lusztig81}
G.~Lusztig.
\newblock Green polynomials and singularities of unipotent classes.
\newblock {\em Adv. in Math.}, 42(2):169--178, 1981.

\bibitem{Lusztig04}
G.~Lusztig.
\newblock Parabolic character sheaves. {II}.
\newblock {\em Mosc. Math. J.}, 4(4):869--896, 981, 2004.

\bibitem{Schubert}
H.~Schubert.
\newblock {\em Kalk{\" u}l der abz{\" a}hlenden Geometrie}.
\newblock Springer-Verlag, 1979.
\newblock Reprint of the 1879 original with an introduction by Steven L.
  Kleiman.

\bibitem{Semple48}
J.G. Semple.
\newblock On complete quadrics.
\newblock {\em J. London Math. Soc.}, 23:258--267, 1948.

\bibitem{Spaltenstein77}
N.~Spaltenstein.
\newblock On the fixed point set of a unipotent element on the variety of
  {B}orel subgroups.
\newblock {\em Topology}, 16(2):203--204, 1977.

\bibitem{Springer68}
T.~A. Springer.
\newblock The unipotent variety of a semi-simple group.
\newblock In {\em Algebraic {G}eometry ({I}nternat. {C}olloq., {T}ata {I}nst.
  {F}und. {R}es., {B}ombay, 1968)}, pages 373--391. Oxford Univ. Press, London,
  1969.

\bibitem{Springer76}
T.A. Springer.
\newblock Trigonometric sums, {G}reen's functions of finite groups and
  representations of {W}eyl groups.
\newblock {\em Invent. Math.}, 36:173--207, 1976.

\bibitem{Springer78}
T.A. Springer.
\newblock A construction of representations of {W}eyl groups.
\newblock {\em Invent. Math.}, 44(3):279--293, 1978.

\bibitem{Springer06}
Tonny~A. Springer.
\newblock Some results on compactifications of semisimple groups.
\newblock In {\em International {C}ongress of {M}athematicians. {V}ol. {II}},
  pages 1337--1348. Eur. Math. Soc., Z\"urich, 2006.

\bibitem{Steinberg76}
R.~Steinberg.
\newblock On the desingularization of the unipotent variety.
\newblock {\em Invent. Math.}, 36:209--224, 1976.

\bibitem{Steinberg88}
Robert Steinberg.
\newblock An occurrence of the {R}obinson-{S}chensted correspondence.
\newblock {\em J. Algebra}, 113(2):523--528, 1988.

\bibitem{Tyrrell56}
J.A. Tyrrell.
\newblock Complete quadrics and collineations in {$S_n$}.
\newblock {\em Mathematika}, 3:69--79, 1956.

\bibitem{Vainsencher82}
I.~Vainsencher.
\newblock {S}chubert calculus for complete quadrics.
\newblock In {\em Enumerative geometry and classical algebraic geometry
  ({N}ice, 1981)}, volume~24 of {\em Progr. Math.}, pages 199--235. Birkh{\"
  a}user Boston, 1982.

\end{thebibliography}
\bibliographystyle{plain}

\end{document}